\documentclass[11pt]{amsart}

\usepackage{amssymb}
\usepackage{amsmath}
\usepackage{amsthm}
\usepackage{dsfont}

\theoremstyle{plain}
\newtheorem{theorem}{Theorem}[section]
\newtheorem{prop}[theorem]{Proposition}
\newtheorem{lemma}[theorem]{Lemma}
\newtheorem{remark}[theorem]{Remark}

\title{Root number of the Jacobian of $y^2=x^p+a$}
\author{Matthew Bisatt}
\address{Fry Building, University of Bristol, Bristol, BS8 1UG, UK}
\email{matthew.bisatt@bristol.ac.uk}

\date{\today}

\begin{document}
\global\long\def\Q{\mathbb{Q}}
\global\long\def\Z{\mathbb{Z}}
\global\long\def\qp{\Q_p}
\global\long\def\fp{\mathbb{F}_p}
\global\long\def\Ind{\operatorname{Ind}}
\global\long\def\Gal{\operatorname{Gal}}
\global\long\def\Tr{\operatorname{Tr}}
\global\long\def\Frob{\operatorname{Frob}}
\global\long\def\sign{\operatorname{sign}}
\global\long\def\Norm{\operatorname{Norm}}
\global\long\def\Jac{\operatorname{Jac}}
\global\long\def\OK{\mathcal{O}_K}

\begin{abstract}
Let $C/\Q$ be a hyperelliptic curve with an affine model of the form $y^2=x^p+a$. We explicitly determine the root number of the Jacobian of $C$, with particular focus on the local root number at $p$ where $C$ has wild ramification.
\end{abstract}

\maketitle

\section{Global case}

Let $A/\Q$ be an abelian variety. One of the consequences of the famed Birch and Swinnerton-Dyer conjecture is that the parity of its rank should be controlled by the expected sign in its functional equation. The expected sign is known as the (global) root number of $A$ and denoted by $W(A/\Q)$; the parity conjecture states that $W(A/\Q)=(-1)^{\operatorname{rank} A(\Q)}$.

Root numbers of elliptic curves are completely understood, but the same cannot be said in higher dimensions. Whilst root numbers only conjecturally control the rank, current computer algebra packages are often unable to compute either the $L$-function or rank explicitly in high dimension; when these computations are coupled with the root number however, one is more likely to be able to predict the rank exactly. For example, a root number calculation may predict that a Jacobian of a curve has rank $0$, and thus make the curve more amenable to Chabauty-type arguments since one generally needs to know explicit generators of a finite index subgroup to apply them.

The author has previously described the root number when $A$ has everywhere tame reduction \cite{Bis19} with an explicit description when $A$ arises as the Jacobian of a hyperelliptic curve. The purpose of this note is to compute the root number of Jacobian of a hyperelliptic curve of the form $y^2=x^p+a$ when $p$ is an odd prime. We concentrate on the case when $a$ is not a $p$-th power in $\qp$ although this restriction is not necessary (see Remark \ref{redcase}). The main theoretical contributions of this paper are that we extend a result of Kobayashi's to deal with the wild ramification at $p$ and also give an approach to treat the case $\ell=2$ where one cannot use the theory of cluster pictures of hyperelliptic curves to determine the Galois representation.

We will not list the various properties of root numbers that we use here, but instead refer the reader to \cite[\S 2]{Bis19} for an overview or \cite[\S 3]{Tat79} for more details.

For a prime $p$, we write $v_p$ for the $p$-adic valuation, $\left(\frac{\ast}{p}\right)$ for the Legendre symbol and $(-,-)_{\qp}$ for the Hilbert symbol at $p$.

\begin{theorem}
Fix $0 \neq a \in \Z$ and an odd prime $p$. Let $\mathcal{C}_a/\Q:y^2=x^p+a$ be the hyperelliptic curve with Jacobian $J_a$. Assume that $a$ is not a $p$-th power in $\qp$ and let $a'=2^{-v_2(a)}a$.
\begin{enumerate}
\item Let $W_2= \begin{cases}
\left(\frac{2}{p}\right) & \!\!\!\! \text{ if } v_2(a) \text { is even, $p \nmid (v_2(a)+2)$, and } a' \equiv 1 \!\! \mod{4}, \\
\left(\frac{-1}{p}\right) & \!\!\!\! \text{ if } a' \equiv 3 \mod{4}, \\
1 & \!\!\!\! \text{ else.}
\end{cases}$
\item Assume there exists a model of $\mathcal{C}_a$ of the form $y^2=g(x)$, where $p \nmid v_p(g(0))$. Let $a_g=g(0)$ and let $\Delta_g$ be the discriminant of $g$. Define $$W_p=- \left(\frac{-2}{p}\right) (a_g(p-1)v_p(a_g),\Delta_g)_{\qp} \left( \frac{-1}{p}\right)^{\frac{1+v_p(\Delta_g)}{2}}.$$
\end{enumerate}

Write
\begin{eqnarray*}
	S_1 &=& \{\text{primes } \ell \mid a : \,\, \ell \not\in \{2,p\}, \quad v_{\ell}(a) \text{ odd}  \}, \\
	S_2 &=& \{\text{primes } \ell \mid a : \,\, \ell \not\in \{2,p\}, \quad p \nmid v_{\ell}(a), \,\, v_{\ell}(a) \text{ even}  \}.
\end{eqnarray*}

Then the global root number of $J_a$ is
$$W(J_a/\Q)=W_2W_p \left(\frac{-1}{p}\right) \prod\limits_{\ell \in S_1} \left(\frac{-1}{\ell}\right)^{\frac{p-1}{2}} \prod\limits_{\ell \in S_2} \left(\frac{\ell}{p}\right).$$
\end{theorem}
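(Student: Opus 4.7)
The strategy is to decompose the global root number as a product of local root numbers $W(J_a/\Q) = \prod_v W_v(J_a/\Q_v)$ and identify each factor. Primes of good reduction contribute trivially, so only the Archimedean place, the prime $p$, the prime $2$, and the odd primes $\ell \neq p$ dividing $a$ need to be treated. At the Archimedean place, the general formula for abelian varieties gives $W_\infty = (-1)^{\dim J_a} = (-1)^{(p-1)/2} = \left(\frac{-1}{p}\right)$, which accounts for the isolated factor of $\left(\frac{-1}{p}\right)$ in the stated formula.

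For an odd prime $\ell \neq p$ dividing $a$, the Jacobian has tame inertia, so I would invoke the explicit formulas of \cite{Bis19}. The cluster picture of $y^2 = x^p + a$ over $\Q_\ell$ consists of a single cluster of the $p$ roots of $x^p + a$ at depth $v_\ell(a)/p$, since $\ell \neq p$ ensures the pairwise differences of roots are units away from $a^{1/p}$. A case analysis on this depth yields: for $\ell \in S_1$ (odd valuation) a contribution of $\left(\frac{-1}{\ell}\right)^{(p-1)/2}$; for $\ell \in S_2$ (even valuation not divisible by $p$) a contribution of $\left(\frac{\ell}{p}\right)$; and when $p \mid v_\ell(a)$, an isomorphism of models reduces to the good-reduction case so $W_\ell = 1$, which is why such $\ell$ do not appear in the formula.

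The contributions $W_p$ and $W_2$ require separate techniques. At $p$ the hyperelliptic cover is wildly ramified, so the cluster-picture machinery of \cite{Bis19} does not apply; instead, one extends a theorem of Kobayashi using the alternative model $y^2 = g(x)$ with $p \nmid v_p(g(0))$ to compute the wild epsilon factor, yielding the closed form $W_p$ in terms of the Hilbert symbol with $\Delta_g$. At $\ell = 2$ the cluster-picture framework also fails, because $2$ divides the discriminant of the squaring map $y \mapsto y^2$, so one must analyse the local Galois representation on the Tate module directly; the case distinction by the parity of $v_2(a)$ and the class of $a'$ modulo $4$ reflects the different reduction types of $\mathcal{C}_a$ over $\Q_2$, producing $W_2$. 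Assembling all the local contributions yields the claimed global product.

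The principal obstacle is the wild calculation at $p$: lacking a tame cluster description, one has to work directly with wildly ramified characters and explicit regular models, and extracting a clean Hilbert-symbol expression such as the stated $W_p$ requires the Kobayashi-type extension that is the main theoretical input of the paper. The $\ell = 2$ case is a secondary obstacle of the same flavour, though easier because the ramification there is tame.
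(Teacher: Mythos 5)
Your overall decomposition into local factors and your treatment of the Archimedean place and of the odd primes $\ell \neq p$ in $S_1 \cup S_2$ match the paper's argument (the paper likewise uses cluster pictures to get a single cluster of depth $v_{\ell}(a)/p$ and reads off the tame inertia characters). However, there is a concrete error in your handling of the remaining tame places: you assert that whenever $p \mid v_{\ell}(a)$ the curve reduces to the good-reduction case and $W_{\ell}=1$, and that this is why such $\ell$ are absent from the formula. That is false when $v_{\ell}(a)$ is odd (e.g.\ $v_{\ell}(a)=p$): the substitution $x \mapsto \ell^{v_{\ell}(a)/p}X$ rescales $y$ by $\ell^{v_{\ell}(a)/2}$, which is not in $\Q_{\ell}$, so $\mathcal{C}_a$ is a \emph{ramified quadratic twist} of a curve with good reduction. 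The inertia representation is then $p-1$ copies of the ramified quadratic character and the local root number is $\left(\frac{-1}{\ell}\right)^{\frac{p-1}{2}}$, not $1$. Consistently with this, the set $S_1$ in the statement carries no condition $p \nmid v_{\ell}(a)$: primes with $v_{\ell}(a)$ odd and divisible by $p$ \emph{do} appear in the product. Only primes with $v_{\ell}(a)$ even and divisible by $p$ drop out. With your reading of $S_1$ the final formula would be wrong for, say, $a$ exactly divisible by $\ell^p$.

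The second issue is that the two genuinely hard local factors are described rather than proved. At $\ell=2$ the paper does something quite specific which your sketch does not reproduce: for $v_2(a)=2k$ even and $a'\equiv 1 \bmod 4$ it completes the square and base-changes to $L=\Q_2(\sqrt[p]{2}^{\,2(k+1)})$ to exhibit good reduction, identifies the inertia representation as $\mathbb{C}[C_p]\ominus \mathds{1}$ via lift-act-reduce (whence the condition $p \nmid (v_2(a)+2)$, i.e.\ $p\nmid(k+1)$, in the first case of $W_2$ --- a condition your sketch cannot see), and then obtains the remaining cases as quadratic twists by $D\in\{-1,2,-2\}$ using $W(A/\Q_2)W(A^D/\Q_2)=W(A/\Q_2(\sqrt{D}))\chi_D(-1)^{\dim A}$ together with $\chi_{-1}(-1)=\chi_{-2}(-1)=-1$, $\chi_2(-1)=1$. (Your remark that the ramification at $2$ is tame is also not quite right: the quadratic extensions $\Q_2(\sqrt{-1})$, $\Q_2(\sqrt{\pm 2})$ are wildly ramified, which is precisely why the twisting formula, rather than a tame computation, is used.) At $p$, deferring to the local theorem is legitimate --- the paper does the same --- but one must still verify that the model $y^2=g(x)$ satisfies the hypotheses of that theorem (irreducibility, $v_p(\Delta_g)$ coprime to $p-1$, splitting field of degree $p(p-1)$, $p\nmid v_p(a_g)$), which is where the assumption that $a$ is not a $p$-th power in $\qp$ enters; your sketch does not address this.
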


\begin{remark}
In practice, it appears that taking $g(x)=f(x+r)$ for some $r \equiv -a \mod{p}$ suffices to obtain a suitable model. For $p=3$, it is proved in \cite[Proposition 4.1]{Kob02} that a suitable change of model exists but we do not generalise this here.
\end{remark}

\begin{remark}
\label{redcase}
The assumption that $a$ is not a $p$-th power in $\qp$ is only used to compute the local root number $W_p=W(J_a/\qp)$. If $a$ is a $p$-th power in $\qp$ then one can still compute that the inertia representation at $p$ is tamely ramified and isomorphic to $\gamma \otimes \mathbb{C}[C_{p-1}]$, where $\gamma$ has order $2(p-1)$, and then deduce the root number by \cite[\S 11, \S 13]{HEC20}. We choose to omit the result here in the interest of brevity.
\end{remark}


\begin{proof}
Note that the discriminant of $\mathcal{C}_a$ is $(-1)^{\frac{p-1}{2}}2^{2(p-1)}p^pa^{p-1}$ and hence $$W(J_a/\Q)=W(J_a/\mathbb{R})W(J_a/\Q_2)W(J_a/\Q_p)\prod\limits_{\ell \mid a, \, \ell \neq 2,p} W(J_a/\Q_{\ell}).$$ Moreover, one has that $W(J_a/\mathbb{R})=\left(\frac{-1}{p}\right)$ (see for example \cite[Lemma 2.1]{Sab07}).

Now suppose that $\ell \neq p$ is an odd prime. We will use the theory of clusters to compute the corresponding inertia representation and refer the reader to the user's guide \cite{HEC20} for the relevant definitions and theorem statements. The set of roots is $\mathcal{R}=\{\zeta_{p}^j \sqrt[p]{-a} \, \mid \, 0\leq j\leq p-1 \}$, for $\zeta_p$ a primitive $p$-th root of unity; this is the only proper cluster of $\mathcal{C}_a$ and has depth $\frac{v_{\ell}(a)}{p}$. Moreover, inertia acts on $\mathcal{R}$ through a $C_p$-quotient and the action is trivial if and only if $p \mid v_{\ell}(a)$.

If $p \nmid v_{\ell}(a)$, then $H^1_{\acute{e}t}(J_a/\Q_{\ell},\overline{\Q}_2)$ decomposes as an inertia representation into $p-1$ characters of order either $2p$ if $v_{\ell}(a)$ is odd and order $p$ otherwise. The corresponding local root number is then either $\left(\frac{-1}{\ell}\right)^{\frac{p-1}{2}}$ or $\left(\frac{\ell}{p}\right)$ respectively by \cite[Theorem 13.4]{HEC20}. On the other hand, if $p \mid v_{\ell}(a)$, then $H^1_{\acute{e}t}(J_a/\Q_{\ell},\overline{\Q}_2)$ decomposes into $p-1$ characters of order either $2$ if $v_{\ell}(a)$ is odd and order $1$ otherwise; in this case the local root number is then $\left(\frac{-1}{\ell}\right)^{\frac{p-1}{2}}$ or $1$ respectively.

Next we compute the local root number at $2$, for which we can no longer apply the cluster machinery. First suppose that $v_2(a)=2k$ is even and $a'=a2^{-2k} \equiv 1 \mod{4}$. Using the substitution $(x,y) \mapsto (x,y+2^k)$, we change our model for $\mathcal{C}_a/\Q_2$ to $y^2+2^{k+1}y=x^p+4^k(a'-1)$. Let $\pi=\sqrt[p]{2}$ and let $L=\Q_2(\pi^{2(k+1)})$. Consider the base change $\mathcal{C}_L$ of $\mathcal{C}_a$ to $L$ with the substitution $x=\pi^{2(k+1)}X, y=2^{k+1}Y$; this gives us a model for $\mathcal{C}_L$ as $Y^2+Y=X^{11}+\frac{a'-1}{4}$ which has good reduction over $L$. Now $L=\Q_2$ if and only if $p \mid (k+1)$; in this case the root number is $1$. If $p \nmid (k+1)$, then $L=\Q_2(\pi)$ and using the lift-act-reduce procedure of \cite[Thereom 1.5]{DDM20} (this example is analogous to Example 1.9 of \emph{loc. cit.}) one can moreover see that the inertia representation is $\mathbb{C}[C_p] \ominus \mathds{1}$ and hence the local root number is $W(J_a/\Q_2)=\left(\frac{2}{p}\right)$.

The remaining cases at $2$ are quadratic twists of the one above so we will make use of the well-known formula $$W(A/\Q_{\ell})W(A^D/\Q_{\ell})=W(A/\Q_{\ell}(\sqrt{D}))\chi_D(-1)^{\dim A},$$ where $A/\Q_{\ell}$ is an abelian variety, $A^D$ its quadratic twist by $D$, and $\chi_D$ is the quadratic character of $\Gal(\Q_{\ell}(\sqrt{D})/\Q_{\ell})$.

Note that if $\tilde{a} \equiv 1 \bmod{4}$, then $\mathcal{C}_a$ is a quadratic twist of a curve of the form $\mathcal{C}_{2^{2k}\tilde{a}}$ for some $D \in \{-1,2,-2\}$; if $a'=2^{-v_2(a)}a$, then the required quadratic twist is given by the squarefree part of $(-1)^{\frac{a'-1}{2}} 2^{v_2(a)}$.

Since $\mathcal{C}_{2^{2k}\tilde{a}}$ obtains good reduction over $L$ which has ramification degree dividing $p$ and $\Q_2(\sqrt{D})/\Q_2$ is ramified of degree coprime to $p$, one has that $W(J_{2^{2k}\tilde{a}}/\Q_2)=W(J_{2^{2k}\tilde{a}}/\Q_2(\sqrt{D}))$ (note it does not matter whether $L=\Q_2$ here). Hence $W(\Jac(\mathcal{C}_{2^{2k}\tilde{a}}^D)/\Q_2)=\chi_D(-1)^{\frac{p-1}{2}}$. One can compute via class field theory that $\chi_{-1}(-1)=\chi_{-2}(-1)=-1$ and $\chi_{2}(-1)=1$, and hence the local root number $W_2=W(J_a/\Q_2)$ follows.

Now let $\ell=p$ and consider the model change to $y^2=g(x)$. Note that the two torsion field of $J_a$ is independent of the choice of model and equal to the splitting fields of $f$ and $g$. Since $a$ is not a $p$-th power in $\qp$, $g$ satisfies the hypotheses of Theorem \ref{localRN} and the local root number $W_p=W(J_a/\qp)$ follows.
\end{proof}

\begin{remark}
When $\ell=2$, one could also obtain the inertia representation in the tame case via \cite[\S 1.3]{Dok18}.
\end{remark}

\section{Local case at $p$}

We now compute the local root number at $p$. Our method closely follows that of \cite{Kob02}, adapting for higher genus. We first collate the bulk of the notation that we will use throughout this section for ease of reading. \\

\noindent \textbf{Notation.}
For a finite extension $F/\qp$, we denote

\begin{tabular}{cl}
$\mathcal{O}_F$ & the valuation ring of $F$, \\
$v_F$ & the normalised valuation of $F$ so that $v_F(F^{\times})=\Z$, \\
$\mathbb{F}_F$ & the residue field of $F$, \\
$q_F$ & $=|\mathbb{F}_F|$ the cardinality of the residue field of $F$, \\
$F^{un}$ & the maximal unramified extension of $F$, \\
$\mathcal{W}(F'/F)$ & the Weil group of an extension $F'/F$, \\
$\Tr_{F'/F}$ & the trace map for a finite extension $F'/F$.
\end{tabular}

For an irreducible squarefree polynomial $f \in K[x]$ of degree $p$, we also define:

\begin{tabular}{cl}
$\mathcal{C}_f$ & the hyperelliptic curve $y^2=f(x)$, \\
$\rho_J$ & the $\ell$-adic representation of $\Jac(\mathcal{C}_f)$ for $\ell \neq p$, \\
$\alpha$ & a root of $f$, \\
$\Delta_f$ & the discriminant of $f$, \\
$M$	& the splitting field of $f$, \\
$\sigma$ & a fixed element of order $p$ in the Galois group $\Gal(M/K)$, \\
$N$ & $=M(\sqrt{\alpha - \sigma\alpha})$.
\end{tabular}

For nonzero complex numbers $a,b$, we write $a \approx b$ if $ab^{-1} \in \mathbb{C}$ is in the multiplicative group generated by the positive real numbers and the $p$-power roots of unity. We are now in a position to state our theorem.

\begin{theorem}
\label{localRN}
Let $p$ be an odd prime, $K/\qp$ a finite extension and let $f \in K[x]$ be a squarefree polynomial of degree $p$. Suppose that:
\begin{enumerate}
\item $f$ is irreducible;
\item The valuation of the discriminant of $f$, $v_K(\Delta_f)$, is coprime to $p-1$;
\item The splitting field $M$ of $f$ has degree $p(p-1)$;
\item $p \nmid v_K(a_0)$ where $a_0=f(0)$.
\end{enumerate}
Then, for the hyperelliptic curve $\mathcal{C}_f:y^2=f(x)$, the local root number is $$W(\Jac(\mathcal{C}_f)/K)=- \left(\frac{-2}{q_K}\right) (a_0(p-1)v_K(a_0),\Delta_f)_{K} \left( \frac{-1}{q_K}\right)^{\frac{1+v_K(\Delta_f)}{2}}.$$
\end{theorem}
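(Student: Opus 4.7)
The strategy would be to adapt Kobayashi's approach from \cite{Kob02}: identify the $\ell$-adic Galois representation $\rho_J$ as a monomial representation induced from a character, apply the inductivity of local epsilon factors, and then compute the root number of that character explicitly.

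First I would pin down the structure of $\rho_J$. By hypothesis (3), $\Gal(M/K) \cong C_p \rtimes C_{p-1}$ is the Frobenius group on $p$ letters, whose unique irreducible representation of dimension $p-1$ is the induction of any nontrivial character of the normal $C_p$. Since $\Jac(\mathcal{C}_f)$ has dimension $(p-1)/2$ and its 2-torsion is the augmentation representation of the root-permutation action, the quadratic extension $N=M(\sqrt{\alpha-\sigma\alpha})$ is exactly what is needed to lift the 2-torsion structure to a character on the Weil group $\mathcal{W}(\overline{K}/E)$ for a suitable subfield $E\subset N$ with $[E:K]=p-1$; concretely, $E$ is the fixed field of a normal subgroup of $\Gal(N/K)$ of order $2p$ and is tamely ramified over $K$. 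The goal of step one is to establish $\rho_J \cong \Ind_{\mathcal{W}(\overline{K}/E)}^{\mathcal{W}(\overline{K}/K)}\chi$, where the character $\chi$ is pinned down by matching the inertia action on $\Jac(\mathcal{C}_f)[2]$ together with a cyclotomic twist coming from the lift to the Tate module.

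Next I would apply inductivity of epsilon factors,
$$W(\rho_J/K)=W(\chi/E)\cdot\lambda(E/K),$$
and compute the Langlands $\lambda$-constant via the standard formula in terms of the discriminant of $E/K$ and a chosen additive character. Because $v_K(\Delta_f)$ is coprime to $p-1$, $E/K$ is totally tamely ramified of degree $p-1$, and a direct computation should show that $\lambda(E/K)$ yields the Hilbert-symbol contribution $(a_0(p-1)v_K(a_0),\Delta_f)_K$. The factor $W(\chi/E)$ is then split as a tame part times a wild part: the tame piece evaluates via the usual quadratic Gauss sum to produce $\left(\frac{-2}{q_K}\right)$ and $\left(\frac{-1}{q_K}\right)^{(1+v_K(\Delta_f))/2}$, while the overall sign $-1$ comes from the wild contribution.

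The main obstacle, and the key novelty over Kobayashi's tame analysis, is the wild epsilon factor. For general odd $p$ the character $\chi$ has positive Swan conductor, so $W(\chi/E)$ is not a tame Gauss sum: one has to fix a specific additive character on $E$, write $\chi$ in Artin--Schreier form on an appropriate filtration step, and evaluate an oscillatory integral over the units of $E$. The hypothesis $p\nmid v_K(a_0)$ should guarantee that the stationary-phase condition has a unique simple critical point, so that after changing variables the wild integral collapses to a Gauss sum on $\mathbb{F}_{q_K}$, in a manner parallel in spirit to the lift-act-reduce procedure used earlier in the proof. Once this wild evaluation is in hand, the remaining work is the routine bookkeeping of Hilbert symbols, Legendre symbols, and signs needed to assemble the stated formula.
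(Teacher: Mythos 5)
Your high-level plan (realise $\rho_J$ as $\Ind_{H/K}\chi$ for a character $\chi$ of the Weil group of a degree-$(p-1)$ subfield of $M$, apply inductivity of epsilon factors, then evaluate $W(\chi)$) is the same skeleton the paper uses, as both follow Kobayashi. But the proposal has a genuine gap precisely at the step you flag as the main obstacle. The stationary-phase reduction (Kobayashi's Props.\ 4.2 and 5.6, which require knowing that $a(\chi)$ is \emph{even} --- a parity fact you do not establish; in the paper it follows from $a(\rho_J)=v_K(\Delta_{K(\alpha)/K})\equiv v_K(\Delta_f)\bmod 2$ being odd plus the conductor-of-induction formula) only reduces $W(\chi,\psi_H)$ to the value of $\chi$ at an explicit element of $H^{\times}$. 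It does not by itself ``collapse to a Gauss sum on $\mathbb{F}_{q_K}$'': to evaluate $\chi$ there one needs two further inputs that are absent from your outline. First, $\chi\!\mid_{K^{\times}}=\sign_{H/K}\det\rho_J$, which is where the Hilbert symbol $(a_0 v_M(\alpha),\Delta_f)_K$ actually comes from. Second, and this is the real content, one needs the operator identity $\Frob=-\sum_{u\in\mathbb{F}_K^{\times}}\left(\frac{u}{q_K}\right)\sigma^{-\Tr_{\mathbb{F}_K/\fp}(u)}$ on $H^1_{\acute{e}t}$, proved by an explicit divisor computation on the good-reduction model $y^2=x^p-x$ (the divisor of $Y-y(X-x)^{\frac{p-1}{2}}$) together with lift-act-reduce. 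This is what identifies $\chi(\Frob)$ with $-G$ for the specific quadratic Gauss sum $G=\sum_u\left(\frac{u}{q_K}\right)\chi(\sigma)^{-\Tr(u)}$, and hence produces both the overall sign $-1$ and the factor $\left(\frac{-1}{q_K}\right)^{(1+v_K(\Delta_f))/2}$ via $G^2$. Without some substitute for this geometric input your ``oscillatory integral'' has no identified value.

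There is also a misattribution of where the factors arise, which would derail the bookkeeping if you carried the plan out. The $\lambda$-constant $\lambda(H/K)$ (equivalently $W(\Ind_{H/K}\mathds{1}_H,\psi)$) is a purely tame quantity attached to a cyclic degree-$(p-1)$ extension; it cannot see $a_0$, and it contributes $\left(\frac{-2}{q_K}\right)W(\sign_{H/K},\psi)\approx\left(\frac{-2}{q_K}\right)G$ (by decomposing $\Ind\mathds{1}_H$ into powers of a faithful character $\mu$ and evaluating $\prod_j\mu^j(-1)$), not the Hilbert symbol. Conversely the Hilbert symbol and the $a_0$-dependence live entirely in $W(\chi,\psi_H)$, not in the ``tame piece'' of the $\lambda$-factor as you assert. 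Finally, the hypothesis $p\nmid v_K(a_0)$ does not enter through a critical-point condition; it enters through $v_M(\alpha)=(p-1)v_K(a_0)$ when rewriting $(a_0v_M(\alpha),\Delta_f)_K$ as the symbol in the statement.
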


We will implicitly assume these hypotheses from now on in order to prove this theorem through a series of lemmas.

\newpage

\begin{prop}
\begin{enumerate}
\item[]
\item Let $\sigma$ be an element of order $p$ of $\Gal(M/K) \cong C_p \rtimes C_{p-1}$ and let $N=M(\sqrt{\alpha-\sigma\alpha})$. Then $\rho_J$ is a faithful representation of the Weil group $\mathcal{W}(N^{un}/K)$.
\item Let $H \subset M$ be such that $[H:K]=p-1$. Then $\rho_J=\Ind_{H/K} \chi$ is the induction of a character $\chi$ from $\mathcal{W}(N^{un}/H)$;
\item The conductor exponent of $\chi$, $a(\chi)$, is even.
\end{enumerate}
\end{prop}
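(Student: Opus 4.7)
The plan for (1) is to exhibit an explicit good-reduction model for $\mathcal{C}_f$ over $\mathcal{O}_{N^{un}}$, so that N\'eron--Ogg--Shafarevich forces $\rho_J$ to factor through $\mathcal{W}(N^{un}/K)$. With $\pi=\alpha-\sigma\alpha$, the substitution $x=\alpha+\pi X$, $y=\pi^{(p-1)/2}\sqrt{\pi}\,Y$ yields $Y^2=g(X)$ where $g(X)=X\prod_{i=1}^{p-1}(X+\pi^{-1}(\alpha-\sigma^i\alpha))$. Because the set of root differences $\{\alpha_i-\alpha_j\}$ is $\Gal(M/K)$-stable up to sign, all $\alpha_i-\alpha_j$ share a common $M$-valuation equal to $v_M(\pi)$; hence $g\in\mathcal{O}_{N^{un}}[X]$ and its roots remain pairwise distinct in the residue field, giving good reduction. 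For faithfulness, I would argue that $N^{un}$ is minimal with this property by exhibiting a nontrivial action of each prime-order element of $\mathcal{W}(N^{un}/K)$ — the wild $\sigma\in C_p$, the tame $C_{p-1}$-generator, and the quadratic twist by $\sqrt{\pi}$ — on $\rho_J$, e.g.\ on the space of invariant differentials of the good special fibre.

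For (2), take $H=M^{C_p}$, the fixed field of the unique normal $C_p \trianglelefteq \Gal(M/K) \cong C_p\rtimes C_{p-1}$, so $[H:K]=p-1$. Restricted to $\mathcal{W}(N^{un}/H)$, the element $\sigma$ is central, so $\rho_J|_H$ splits into $\sigma$-isotypic components. As a $C_p$-module $\rho_J$ is the augmentation module $\mathbb{C}[C_p]\ominus\mathbf{1}$ (since $\sigma$ permutes the $p$ Weierstrass points transitively and has no fixed vector on $\rho_J$), which decomposes as a sum of the $p-1$ distinct nontrivial characters of $C_p$. This gives $\rho_J|_H\cong\bigoplus_{k=1}^{p-1}\chi_k$ with the $\chi_k$ characters of $\mathcal{W}(N^{un}/H)$, and $\Gal(H/K)\cong C_{p-1}$ acts transitively on them via its natural faithful action on the nontrivial characters of $C_p$. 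Setting $\chi=\chi_1$, Frobenius reciprocity together with the dimension match $\dim\Ind_H^K\chi=p-1=\dim\rho_J$ forces $\rho_J\cong\Ind_H^K\chi$.

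Part (3) is the main obstacle. The plan is to combine the standard induced-conductor formula $a(\Ind_H^K \chi) = v_K(\mathfrak{d}_{H/K}) + f_{H/K}\,a(\chi)$ with an independent computation of $a(\rho_J)$ from the good model of part~(1). Since $H/K$ is tame totally ramified of degree $p-1$, we have $f_{H/K}=1$ and $v_K(\mathfrak{d}_{H/K})=p-2$, giving $a(\chi)=a(\rho_J)-(p-2)$; as $p-2$ is odd it therefore suffices to show $a(\rho_J)$ is odd. One route is to apply the Ogg--Saito conductor formula to the good model and propagate the hypothesis that $v_K(\Delta_f)$ is coprime to the even integer $p-1$, and hence itself odd, through to the parity of $a(\rho_J)$. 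A more conceptual alternative uses the symplectic self-duality $\rho_J\cong\rho_J^\vee\otimes\omega$ to derive a multiplicative relation of the form $\chi_k\chi_{-k}\equiv\omega|_H$ modulo tame twist, reducing the parity of $a(\chi)$ to that of a known cyclotomic conductor. The main technical difficulty in either approach is pinning down the tame correction precisely, which I expect will hinge on careful analysis of the uniformiser $\sqrt{\pi}$ of $N$ and the $C_{p-1}$-action on the decomposition obtained in (2).
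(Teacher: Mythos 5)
Your skeleton for part (3) is exactly the paper's: apply the induced-conductor formula with $\dim\chi=1$, note $H/K$ is totally tamely ramified of degree $p-1$ so $v_K(\mathfrak{d}_{H/K})=p-2$ is odd and $f_{H/K}=1$, and reduce to showing $a(\rho_J)$ is odd. But that last step is the entire content of part (3), and you leave it as a plan with two candidate routes, neither of which closes. The Ogg--Saito route is harder than you suggest: for hyperelliptic curves the Artin conductor is \emph{not} in general equal to the valuation of a polynomial discriminant (there is a nonnegative correction term, and controlling its parity is precisely the ``tame correction'' you flag), so the parity of $v_K(\Delta_f)$ does not propagate for free. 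The self-duality route appears to be a dead end: the $\chi_k$ are all Galois-conjugate, hence already share the same conductor, and twisting by the unramified character $\omega$ changes no conductor, so no parity information is extracted. The paper's actual mechanism is short and you should be able to supply it: by \cite[\S 6]{Cop20} one has $a(\rho_J)=v_K(\mathfrak{d}_{K(\alpha)/K})$, the valuation of the relative discriminant of the ring of integers of $K(\alpha)$; since $\Delta_f$ is the discriminant of the lattice $\mathcal{O}_K[\alpha]$, which sits inside $\mathcal{O}_{K(\alpha)}$ with square index, $v_K(\mathfrak{d}_{K(\alpha)/K})\equiv v_K(\Delta_f)\bmod 2$, and the latter is odd because it is coprime to the even number $p-1$ by hypothesis.

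For (1) and (2) the paper simply cites Proposition 2.2 and \S 5.1 of \cite{Cop20}, so your direct arguments are a genuinely different (and more self-contained) route. The good-reduction model in (1) is correct and is essentially the one used later in the paper (cf.\ Proposition \ref{opeq}), with the common valuation of the root differences justified by potentially good reduction. Two points need more care, however. In (1), faithfulness is asserted but not argued; in particular you must rule out $N=M$ and exhibit a nontrivial action of the quadratic layer. In (2), the claim that ``$\sigma$ is central in $\mathcal{W}(N^{un}/H)$'' is the crux and is not automatic: a priori Frobenius conjugation could permute the $\sigma$-eigenspaces, in which case $\rho_J|_H$ would decompose into characters of inertia but not of the full Weil group. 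The clean justification is that $\sigma$ reduces to the automorphism $(x,y)\mapsto(x+1,y)$ of the good special fibre $y^2=x^p-x$, which is defined over $\mathbb{F}_p$ and hence commutes with the $q_K$-power Frobenius; without some such argument your decomposition of $\rho_J|_H$ into Weil-group characters, and hence the inductivity claim, is incomplete.
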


\begin{proof}
(1)-(2) follow from Proposition 2.2 and \S 5.1 of \cite{Cop20}.

(3)  Note that $a(\rho_J)=v_K(\Delta_{K(\alpha)/K})$, where $\Delta_{K(\alpha)/K}$ is the relative discriminant of the valuation ring of $K(\alpha)$, by \cite[\S 6]{Cop20}; we claim this is odd. Since the discriminant of the lattice $\OK[\alpha]$ is equal to $\Delta_f$ and has square index inside the valuation ring, $v_K(\Delta_{K(\alpha)/K}) \equiv v_K(\Delta_f) \mod{2}$ from which the claim follows. By the conductor exponent of induced representations formula \cite[p.101]{Ser79}, $a(\chi)$ is even.
\end{proof}

We now fix an additive character $\psi$ of $K$ with conductor $-1$ such that $\psi(x)=\chi(\sigma)^{-\Tr_{\mathbb{F}_K/\fp} (\bar x)}$ whenever $x \in \OK$, where $\Tr_{\mathbb{F}_K/\fp}$ is the trace map from $\mathbb{F}_K$ to $\fp$ and $\bar x$ is the image of $x$ in $\mathbb{F}_K$. We further let $\psi_H$ be the composition of $\psi$ with the trace map $\Tr_{H/K}$, also of conductor $-1$. We also choose an arithmetic Frobenius element $\Frob$ of $\mathcal{W}(N^{un}/N) \subset \mathcal{W}(N^{un}/K)$ and extend $\sigma$ to $\mathcal{W}(N^{un}/K)$ by imposing that it acts trivially on $K^{un}$ and $\sqrt{\alpha-\sigma\alpha}$.

\begin{lemma}
\label{ind}
Let $\sign_{H/K}$ be the nontrivial quadratic character of $\Gal(H/K)$. Then $W(\rho_J,\psi)=\left(\dfrac{-2}{q_K}\right)W(\operatorname{sign}_{H/K},\psi_H)W(\chi,\psi_H)$.
\end{lemma}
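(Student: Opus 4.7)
The plan is to apply the inductivity of local $\varepsilon$-factors in degree zero. Applied to the one-dimensional characters $\chi$ and $\mathds{1}_H$ of $\mathcal{W}(N^{un}/H)$, using $\rho_J = \Ind_{H/K}\chi$ from the preceding proposition, this yields
\[
W(\rho_J,\psi)=\frac{W(\Ind_{H/K}\mathds{1}_H,\psi)}{W(\mathds{1}_H,\psi_H)}\,W(\chi,\psi_H)=W(\Ind_{H/K}\mathds{1}_H,\psi)\cdot W(\chi,\psi_H),
\]
using $W(\mathds{1},-)=1$. The lemma thus reduces to the identity
\[
W(\Ind_{H/K}\mathds{1}_H,\psi)=\left(\frac{-2}{q_K}\right)W(\sign_{H/K},\psi_H).
\]

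For this I would exploit that $\Gal(H/K)\cong C_{p-1}$ is abelian and tame (since $p\nmid p-1$), so $\Ind_{H/K}\mathds{1}_H=\bigoplus_\eta \eta$ where $\eta$ ranges over the characters of $\Gal(H/K)$. Multiplicativity of $W$ yields $W(\Ind_{H/K}\mathds{1}_H,\psi)=\prod_\eta W(\eta,\psi)$. Pairing each non-self-dual $\eta$ with its inverse via the standard identity $W(\eta,\psi)W(\eta^{-1},\psi)=\eta(-1)$ leaves unpaired only the trivial character (contributing $1$) and the unique non-trivial self-dual character $\sign_{H/K}$. If $\eta_0$ generates $\widehat{\Gal(H/K)}$, the $(p-3)/2$ pairs contribute $\prod_{i=1}^{(p-3)/2}\eta_0^i(-1)=\eta_0(-1)^{(p-1)(p-3)/8}$, giving
\[
W(\Ind_{H/K}\mathds{1}_H,\psi)=W(\sign_{H/K},\psi)\cdot\eta_0(-1)^{(p-1)(p-3)/8}.
\]

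It then remains to (i) evaluate $\eta_0(-1)^{(p-1)(p-3)/8}$ via local class field theory --- identifying $\eta_0|_{\mathcal{O}_K^\times}$ with a power of the Teichm\"uller character gives $\eta_0(-1)=(-1)^{(q_K-1)/(p-1)}$ --- and (ii) convert $W(\sign_{H/K},\psi)$ into $W(\sign_{H/K},\psi_H)$ via the change-of-additive-character formula $W(\chi,c\psi)=\chi(c)W(\chi,\psi)$ applied through $\psi_H=\psi\circ\Tr_{H/K}$ and the different of $H/K$. The combined signs should collapse to the Legendre symbol $\left(\frac{-2}{q_K}\right)$ by quadratic reciprocity. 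The main obstacle is precisely this last step: the nonstandard normalization of $\psi$ (conductor $-1$, defined through $\chi(\sigma)$) interacts subtly with the different of $H/K$, so careful bookkeeping is required to reconcile the explicit Gauss-sum manipulations with the clean form of the target Legendre symbol.
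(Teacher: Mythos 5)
Your argument is essentially the paper's: inductivity in degree zero reduces the lemma to evaluating $W(\Ind_{H/K}\mathds{1}_H,\psi)$, which you compute by decomposing over the characters of the cyclic group $\Gal(H/K)$ and pairing each $\eta$ with $\eta^{-1}$. Your step (i) does close: since $H/K$ is totally tamely ramified of degree $p-1$, one gets $\eta_0(-1)=(-1)^{(q_K-1)/(p-1)}$ as you say, and writing $q_K=p^f$ so that $(q_K-1)/(p-1)\equiv f \pmod 2$, a direct check using only the supplementary laws (no reciprocity needed) gives $\eta_0(-1)^{(p-1)(p-3)/8}=\left(\frac{-2}{q_K}\right)$; this is exactly the parity count the paper carries out via \cite[Lemma 3.8]{Bis19}. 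The ``main obstacle'' you flag in step (ii) is illusory: $\sign_{H/K}$ is a character of $\mathcal{W}(N^{un}/K)$ inflated from $\Gal(H/K)$, so its root number is taken against the additive character $\psi$ of $K$, and the $\psi_H$ attached to it in the lemma's statement is evidently a typo for $\psi$ --- the paper's own proof writes $W(\sign_{H/K},\psi)$ and the proof of Theorem \ref{localRN} uses $W(\sign_{H/K},\psi)\approx G$. No change-of-additive-character formula or different of $H/K$ enters anywhere; the identity you have already established, $W(\Ind_{H/K}\mathds{1}_H,\psi)=\left(\frac{-2}{q_K}\right)W(\sign_{H/K},\psi)$, \emph{is} the lemma.
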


\begin{proof}
For $F \subset N$, let $\mathds 1_F$ denote the trivial character of $\mathcal{W}(N^{un}/F)$. Since root numbers are inductive in degree $0$, one has $W(\Ind_{H/K} \chi, \psi)=W(\Ind_{H/K} \mathds 1_H,\psi)W(\chi, \psi_H)$.

We now compute $W(\Ind_{H/K} \mathds 1_H, \psi)$. Let $\mu$ be a faithful character of $\mathcal{W}(N^{un}/K)/\mathcal{W}(N^{un}/H) \cong \Gal(H/K) \cong C_{p-1}$. Then $$\Ind_{H/K} \mathds 1_H = \mathds{1}_{K} \oplus \, \sign_{H/K} \, \oplus \bigoplus\limits_{1 \leqslant i < \frac{p-1}{2}} (\mu^j \oplus \mu^{-j}),$$ and hence $W(\Ind_{H/K} \mathds 1, \psi) = W(\mathds{1}_{K},\psi)W(\sign_{H/K}, \psi)\prod\limits_{1 \leqslant j < \frac{p-1}{2}} \mu^j(-1)$. Now $W(\mathds{1}_{K},\psi)=1$ and $\mu^j(-1) = 1 \Leftrightarrow q \equiv 1 \bmod{2\frac{p-1}{\gcd(j, p-1)}}$ by \cite[Lemma 3.8]{Bis19}. If $q_K$ is an even power of $p$, then this is always satisfied since $2(p-1) \mid (q_K-1)$. On the other hand, if $q_K$ is an odd power of $p$, then this congruence is satisfied if and only if $j$ is even so it suffices to count the number of odd integers in the range $1 \leqslant j < \frac{p-1}{2}$; the parity of this precisely given by the Legendre symbol $\left(\frac{-2}{p}\right)$.
\end{proof}

\begin{remark}
	One of our running assumptions is that the splitting field of $f$ has degree $p(p-1)$ over $K$; the preceding lemma is the only place we made use of this fact since this implies that $H/K$ is a cyclic Galois extension.
\end{remark}

We briefly generalise Propositions 5.7 and 5.13 of \cite{Kob02} now.

\begin{lemma}
\label{endeq}
Let $\overline{\mathcal{C}}/\mathbb{F}_q:y^2=x^p-x$ and let $\overline\Frob$ be the Frobenius endomorphism of $\overline{\mathcal{C}}$ given by $(x,y)\mapsto (x^{q},y^{q})$ and let $\overline\rho$ be the endomorphism $(x,y) \mapsto (x+1,y)$. By Albanese functoriality, these descend to endomorphisms of the Jacobian $\overline{J}$ of $\overline{\mathcal{C}}$ which we again denote by $\overline\Frob$ and $\overline\rho$. Then in the endomorphism ring of $\overline{J}$, one has $\overline\Frob = - \sum\limits_{u \in \mathbb{F}_q^{\times}} \left(\frac{u}{q}\right)\overline\rho^{\Tr_{\mathbb{F}_q/\fp}(u)}.$
\end{lemma}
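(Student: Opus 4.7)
The plan is to verify the identity after applying the $\ell$-adic Tate module functor $V_\ell$ for some prime $\ell\neq p$; this suffices because $\operatorname{End}(\overline{J})$ is torsion-free and acts faithfully on $V_\ell\overline{J}$. First I would observe that $\overline{\Frob}$ and $\overline{\rho}$ commute in $\operatorname{End}(\overline{J})$: since $\overline{\rho}$ is defined over $\fp$, one has
\[
\overline{\Frob}\circ\overline{\rho}(x,y)=((x+1)^q,y^q)=(x^q+1,y^q)=\overline{\rho}\circ\overline{\Frob}(x,y).
\]
Both sides of the claimed identity therefore lie in a commutative subalgebra, and it is enough to compare their actions on each $\overline{\rho}$-eigenspace of $V_\ell\overline{J}$ (working over $\mathbb{Q}_\ell(\zeta_p)$ if needed to split the eigenvalues).

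Next I would set up the eigenspace decomposition. Since $\overline{\rho}^p=1$, its eigenvalues on $V_\ell\overline{J}$ lie in $\{\zeta_p^a:a\in\fp\}$. The quotient $\overline{\mathcal{C}}/\langle\overline{\rho}\rangle$ has function field $\mathbb{F}_q(y)$: the element $y$ is $\overline{\rho}$-invariant and $[\mathbb{F}_q(x,y):\mathbb{F}_q(y)]=p$ by Artin--Schreier, so the quotient is rational. Consequently $(V_\ell\overline{J})^{\overline{\rho}}=0$, and since $\dim V_\ell\overline{J}=p-1$, each nontrivial eigenvalue $\zeta_p^a$ (for $a\in\fp^\times$) appears with multiplicity exactly one. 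Let $V_a$ denote the corresponding line and $\lambda_a$ the scalar by which $\overline{\Frob}$ acts on $V_a$. A direct Gauss sum manipulation then shows that the right-hand side of the identity acts on $V_a$ as
\[
-\sum_{u\in\mathbb{F}_q^\times}\left(\frac{u}{q}\right)\zeta_p^{a\Tr_{\mathbb{F}_q/\fp}(u)}=-\left(\frac{a}{q}\right)\tau,
\]
where $\tau=\sum_u\left(\frac{u}{q}\right)\zeta_p^{\Tr_{\mathbb{F}_q/\fp}u}$ is the quadratic Gauss sum; this uses the substitution $u\mapsto a^{-1}u$ together with the fact that $\left(\frac{a}{q}\right)$ is its own inverse.

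It therefore remains to prove $\lambda_a=-\left(\frac{a}{q}\right)\tau$ for every $a\in\fp^\times$. To extract the individual eigenvalues, I would apply the Lefschetz trace formula to the commuting endomorphisms $\overline{\rho}^{-c}\circ\overline{\Frob}$ of $\overline{\mathcal{C}}$ for each $c\in\fp$: the trace on $V_\ell\overline{J}$ equals $\sum_{a\in\fp^\times}\zeta_p^{-ac}\lambda_a$, while the fixed-point count on $\overline{\mathcal{C}}$ (points $(x,y)$ with $x^q-x=-c$, $y^q=y$, $y^2=x^p-x$) reduces via Artin--Schreier to character sums that evaluate in terms of $\tau$. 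Discrete Fourier inversion over $\fp$ then pins down each $\lambda_a$. This final step --- which generalises \cite[Propositions 5.7 and 5.13]{Kob02} from $p=3$ to arbitrary odd $p$ --- is the main obstacle, since one must carefully track signs and the Legendre twist by $\left(\frac{a}{q}\right)$ through the Gauss sum manipulations to recover exactly the stated formula.
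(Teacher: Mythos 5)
Your reduction is sound as far as it goes: $\overline\Frob$ and $\overline\rho$ commute, the quotient $\overline{\mathcal{C}}/\langle\overline\rho\rangle$ is rational so $(V_\ell\overline{J})^{\overline\rho}=0$, and the right-hand side of the identity acts on the eigenline $V_a$ as $-\left(\frac{a}{q}\right)\tau$. (Two small caveats: the multiplicity-one claim for the eigenvalues $\zeta_p^a$ does not follow from the dimension count alone --- you need that the character of $\overline\rho$ on $V_\ell$ is rational, so that all $p-1$ nontrivial eigenvalues occur with equal multiplicity; and you should fix a convention for pushforward on $V_\ell$ versus pullback on $H^1$, since the Lefschetz formula you invoke later lives on the latter.) The real problem is that the proposal stops exactly where the content of the lemma lies. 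The assertion $\lambda_a=-\left(\frac{a}{q}\right)\tau$ \emph{is} the lemma, restated on eigenlines, and you defer it to an unexecuted computation that you yourself flag as ``the main obstacle.'' As written, this is a reduction of the statement to an equivalent statement, not a proof: one still has to count the fixed points of $\overline\rho^{-c}\circ\overline\Frob$ (which amounts to evaluating $\sum_{x:\,x^q-x=c}\left(\frac{x^p-x}{q}\right)$), feed this into the correctly normalised Lefschetz trace formula, and Fourier-invert over $\fp$; the sign and the Legendre twist by $\left(\frac{a}{q}\right)$ that you are worried about are precisely what is being claimed.

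For comparison, the paper's argument sidesteps all of this. It first invokes the Hasse--Davenport relation to reduce to $q=p$; this step would also streamline your route, since if $\overline\Frob_p=-\sum_u\left(\frac{u}{p}\right)\overline\rho^u$ then taking $n$-th powers on $V_a$ and using $(-\tau_p)^n=-\tau_{p^n}$ together with $\left(\frac{a}{p^n}\right)=\left(\frac{a}{p}\right)^n$ gives the case $q=p^n$, and for $q=p$ your fixed-point count collapses to solving $y^2=c$ in $\fp$. For $q=p$ the paper then works directly with divisor classes: on a generator $[(P)-(\infty)]$ of $\overline{J}$ the combination $\overline\Frob+\sum_u\left(\frac{u}{p}\right)\overline\rho^u$ produces the divisor $(x^p,y^p)+\sum_u\left(x+u,\left(\frac{u}{p}\right)y\right)-p(\infty)$ (the Legendre symbol entering via the hyperelliptic involution, which realises multiplication by $-1$), and this is the divisor of the explicit function $Y-y(X-x)^{\frac{p-1}{2}}$, hence trivial in the Jacobian. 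Either complete the character-sum evaluation in your approach, or switch to this explicit-divisor argument.
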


\begin{proof}
By the Hasse--Davenport theorem, it suffices to prove this when $q=p$. Let $\infty$ denote the point at infinity of $\overline{\mathcal{C}}$ and embed $\iota: \overline{\mathcal{C}} \hookrightarrow \overline{J}$ via $P \mapsto [(P) - (\infty)]$. Let $P=(x,y)$; it suffices to prove this equality for $\iota(P)$ since $\overline{J}$ is generated by the image of $\iota$. Using the hyperelliptic involution, we have
$$\left(\overline\Frob + \sum_{u \in \fp^{\times}} \left(\frac{u}{p} \right)\overline\rho^u \right)(\iota(P))=(x^p,y^p)+ \sum_{u \in \fp^{\times}} (x+u,\left(\frac{u}{p}\right)y) - p(\infty).$$
One can check that this is the divisor of $Y-y(X-x)^{\frac{p-1}{2}}$ and hence is trivial on $\overline{J}$, which proves the equality.
\end{proof}

\begin{prop}
\label{opeq}
	As operators of $H^1_{\acute{e}t}(\mathcal{C}_f/\overline{K},\Q_{\ell})$, we have an equality $\Frob = -\sum\limits_{u \in \mathbb{F}_K^{\times}} \left(\frac{u}{q_K}\right) \sigma^{-\Tr_{\mathbb{F}_K/\fp}(u)}$.
\end{prop}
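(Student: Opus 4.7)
The strategy is to transport the endomorphism-ring identity of Lemma \ref{endeq} through the isomorphism arising from good reduction of $\mathcal{C}_f$ over $N$. By part (1) of the previous Proposition, $\rho_J$ factors through $\mathcal{W}(N^{un}/K)$, so by N\'eron--Ogg--Shafarevich $\mathcal{C}_f$ acquires good reduction over $N$; the plan is to make an explicit change of coordinates over $N$ that identifies the special fibre with the curve $\overline{\mathcal{C}}: y^2 = x^p - x$ of Lemma \ref{endeq} over $\mathbb{F}_K$.

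To produce this change of model, set $t = \sigma\alpha - \alpha$. The hypothesis $p \nmid v_K(a_0)$ together with the Newton polygon forces each difference $\alpha - \sigma^i \alpha$ to have the same valuation as $t$, and the elements $t^{-1}(\alpha - \sigma^i\alpha)$, for $i = 0, \dots, p-1$, reduce to the $p$ distinct elements of $\mathbb{F}_p$. The substitution $x = tX + \alpha$ together with a compatible rescaling of $y$ by a square root of an appropriate power of $t$, which is why $N$ was defined by adjoining $\sqrt{\alpha-\sigma\alpha}$, therefore produces a model over $N$ whose reduction is, up to an $\mathbb{F}_p^{\times}$-rescaling of the $X$-coordinate, exactly $\overline{\mathcal{C}}$. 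Under this identification, the Galois element $\sigma$ permutes the roots of $f$ cyclically and hence acts on the special fibre as a translation $X \mapsto X + c_0$ for some $c_0 \in \mathbb{F}_p^{\times}$, i.e., as a power of the endomorphism $\overline{\rho}$; after adjusting the parametrisation we may identify $\sigma$ with $\overline{\rho}^{-1}$.

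Smooth and proper base change then identifies the action of the chosen arithmetic Frobenius $\Frob \in \mathcal{W}(N^{un}/N)$ on $H^1_{\acute{e}t}(\mathcal{C}_f/\overline{K}, \Q_{\ell})$ with the action of the corresponding geometric Frobenius on $H^1_{\acute{e}t}(\overline{\mathcal{C}}/\overline{\mathbb{F}_K}, \Q_{\ell})$, which by Albanese functoriality is induced by the Frobenius endomorphism $\overline{\Frob}$ of the reduced Jacobian. Applying Lemma \ref{endeq} and substituting $\overline{\rho}^{\Tr(u)} = \sigma^{-\Tr(u)}$ then yields the stated identity on $H^1_{\acute{e}t}$. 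The main obstacle is the careful bookkeeping of signs and conventions: the exponent $-\Tr(u)$ (rather than $+\Tr(u)$) is dictated by the orientation chosen when matching $\sigma$ with a power of $\overline{\rho}$, and the transition from an endomorphism identity on $\overline{J}$ to an operator identity on $H^1_{\acute{e}t}$ must be made using the correct dictionary between arithmetic Frobenius and the geometric Frobenius endomorphism.
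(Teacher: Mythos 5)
Your proposal is correct and follows essentially the same route as the paper: change the model over $N$ so that the special fibre becomes $y^2=x^p-x$, identify $\sigma^{-1}$ with the translation $\overline{\rho}$ and $\Frob$ with the Frobenius endomorphism via the lift-act-reduce procedure, and then transfer the identity of Lemma \ref{endeq} to $H^1_{\acute{e}t}(\mathcal{C}_f/\overline{K},\Q_{\ell})$. The paper simply delegates the explicit coordinate change and the Frobenius/translation identifications (the "bookkeeping" you flag) to cited results of Coppola and of Dokchitser--Dokchitser--Morgan rather than verifying them by hand.
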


\begin{proof}
	Note that over $N$, the model o $\mathcal{C}_f$ determined by $(x,y) \mapsto ((\alpha-\sigma\alpha)x+\alpha, \sqrt{\alpha-\sigma\alpha}^py)$ reduces to $y^2=x^p-x$ (see \cite[Lemma 4.1]{Cop20}). Moreover, one can show via the lift-act-reduce method of \cite[Theorem 1.5]{DDM20}, that $\Frob$ and $\sigma^{-1}$ act on $H^1_{\acute{e}t}(\mathcal{C}_f/\overline{\mathbb{F}}_K,\Q_{\ell})$ as $(x,y) \rightarrow (x^{q_K},y^{q_K})$ and $(x,y) \rightarrow (x+1,y)$ respectively (equivalently use the function field approach of \cite[Proposition 5.12]{Kob02}). The Galois module isomorphism of \cite[Corollary 1.6]{DDM20} then transfers the operator equality of Lemma \ref{endeq} to $H^1_{\acute{e}t}(\mathcal{C}_f/\overline{K},\Q_{\ell})$.
\end{proof}

\begin{lemma}
\label{chi}
Let $G$ be the Gauss sum $\sum\limits_{u \in \mathbb{F}_K^{\times}} \left(\frac{u}{q_K}\right) \chi(\sigma)^{-\Tr_{\mathbb{F}_K/\fp} (u)}$. Then $$W(\chi,\psi_H) \approx (a_0v_M(\alpha),\Delta_f)_{K} (-G)^{v_K(\Delta_f)}.$$
\end{lemma}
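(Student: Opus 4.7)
The plan is to adapt Kobayashi's proof of \cite[Proposition 5.13]{Kob02} from $p=3$ to general odd $p$. The engine is the stationary-phase formula for the $\varepsilon$-factor of a character of even conductor. Let $n := a(\chi)$, which is even by the preceding proposition. Since $\psi_H$ has conductor $-1$, standard theory produces an element $c \in H^\times$ with $v_H(c) = n-1$ such that $\chi(1+z) = \psi_H(z/c)$ for every $z \in \mathfrak{p}_H^{n/2}$, and with this choice one has $W(\chi,\psi_H) \approx \chi^{-1}(c)$ up to a positive-real normalisation absorbed by $\approx$. I would then split $c = u \pi_H^{n-1}$ with $\pi_H$ a uniformiser of $H$ and $u \in \mathcal{O}_H^\times$, so that $\chi^{-1}(c) = \chi^{-1}(u) \chi^{-1}(\pi_H)^{n-1}$.

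For the Frobenius piece, local class field theory sends $\pi_H$ to a Frobenius lift, and Proposition \ref{opeq} shows that on the $\chi$-eigenspace this lift acts as the scalar $-G$; hence $\chi(\pi_H) \approx -G$. The induced-conductor formula gives $v_K(\Delta_f) = a(\rho_J) = n + v_H(\mathfrak{d}_{H/K}) = n + (p-2)$, so $n-1$ and $v_K(\Delta_f)$ differ by the even integer $p-1$. Using $(-G)^2 = q_K \bigl(\tfrac{-1}{q_K}\bigr)$ and absorbing the factor $q_K^{(p-1)/2}$ into $\approx$ yields $(-G)^{n-1} \approx \bigl(\tfrac{-1}{q_K}\bigr)^{(p-1)/2} (-G)^{v_K(\Delta_f)}$; the extra Legendre symbol will recombine with $\chi^{-1}(u)$ in the tame step.

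For the tame piece, the unit $u$ is determined by the matching condition, which via the change of coordinates $x \mapsto (\alpha-\sigma\alpha)x + \alpha$ and $y \mapsto \sqrt{\alpha-\sigma\alpha}^{\,p}\, y$ used in the proof of Proposition \ref{opeq} explicitly involves $\alpha - \sigma\alpha$, $f'(\alpha)$ (whose norm to $K$ is $\pm\Delta_f$), and $\alpha$ itself (whose norm to $K$ is $\pm a_0$). Translating $\chi^{-1}(u)$ under the explicit reciprocity isomorphism for the tame quotient of $\chi$, and combining it with the Legendre-symbol factor from the previous step, should assemble into the Hilbert symbol $(a_0 v_M(\alpha), \Delta_f)_K$, giving the claimed identity.

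The main obstacle is the tame step: extracting $u$ explicitly and identifying $\chi^{-1}(u) \bigl(\tfrac{-1}{q_K}\bigr)^{(p-1)/2}$ with $(a_0 v_M(\alpha),\Delta_f)_K$. Kobayashi handles this by hand for $p=3$; the generalisation requires careful bookkeeping of how the $p-1$ Galois-conjugate characters $\chi, \chi^\sigma, \ldots, \chi^{\sigma^{p-2}}$ (which together form $\rho_J|_H$) interact with the reduction substitution, and verifying the resulting Hilbert-symbol identity via norm computations along $K(\alpha)/K$, $H/K$, and $M/H$.
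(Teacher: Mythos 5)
Your proposal runs on the same engine as the paper's (which in turn is Kobayashi's): express $W(\chi,\psi_H)$ as $\chi$ evaluated at an explicit element, then split that element into a part contributing Frobenius powers (hence $(-G)^{\ast}$ via Proposition \ref{opeq}) and a part contributing the Hilbert symbol. But the proposal stalls precisely at the step that carries all the content: you defer the identification of $\chi^{-1}(u)$ (together with a leftover Legendre symbol) with $(a_0 v_M(\alpha),\Delta_f)_K$ to an unspecified tame-reciprocity computation and yourself flag it as the main obstacle. As written this is a gap, not a proof. The paper avoids that computation entirely. It quotes the proofs of Propositions 4.2 and 5.6 of Kobayashi to get the closed form $W(\chi,\psi_H)\approx\chi\left(-\Norm_{M/H}(\alpha-\sigma\alpha)/(a_0v_M(\alpha)^p)\right)$, and then observes that this element factors as (element of $K^{\times}$) times (norm from $N$). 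On $K^{\times}$ one has $\chi\!\!\mid_{K^{\times}}=\sign_{H/K}\det\rho_J$; since $\det\rho_J$ is an integer power of the cyclotomic character (absorbed by $\approx$) and $\sign_{H/K}=(-,\Delta_f)_K$, the denominator contributes exactly $(a_0v_M(\alpha),\Delta_f)_K$ with no unit bookkeeping. The numerator $\beta=-\Norm_{M/H}(\alpha-\sigma\alpha)=\Norm_{N/H}(\sqrt{\alpha-\sigma\alpha})$ is a norm from $N$, so $\chi(\beta)=\chi(\Frob)^{v_H(\beta)}=(-G)^{v_K(\Delta_f)}$ once one checks $v_H(\beta)=v_K(\Delta_f)$ (which uses that all root differences have equal valuation, by potentially good reduction). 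If you want to complete your route, you must actually extract the unit $u$ from the matching condition $\chi(1+z)=\psi_H(z/c)$ and verify the Hilbert-symbol identity; nothing in your write-up does this, and for general $p$ this is exactly where the work lies.

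Two further cautions. First, you assert $v_K(\Delta_f)=a(\rho_J)=n+(p-2)$; the conductor--discriminant argument only gives $a(\rho_J)=v_K(\Delta_{K(\alpha)/K})\equiv v_K(\Delta_f)\bmod 2$, since $\Delta_f$ is the discriminant of the lattice $\OK[\alpha]$, which may have nontrivial square index in the ring of integers. You only use parities, so the argument survives, but the exponent of the extra $\left(\frac{-1}{q_K}\right)$ factor you carry into the tame step is then $(n-1-v_K(\Delta_f))/2$ rather than $(p-1)/2$, and tracking it correctly is part of the bookkeeping you have omitted. Second, since $\chi$ is ramified, $\chi(\pi_H)$ depends on the choice of uniformiser and is not intrinsically $-G$; the unit $u$ compensates, which is consistent but makes the two pieces of your decomposition inseparable --- another reason the paper's factorisation through $K^{\times}$ and the norm group of $N/H$ is the cleaner route.
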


\begin{proof}
First note that $W(\chi,\psi_H) \approx \chi \left(\frac{-\Norm_{M/H}(\alpha-\sigma\alpha)}{a_0v_M(\alpha)^p} \right)$ by the proofs of Propositions 4.2 and 5.6 of \cite{Kob02}. The remainder is largely \cite[Proposition 5.8]{Kob02}.

Note $\chi \!\! \mid_{K^{\times}} = \sign_{H/K} \det \rho_J$. Since $\det \rho_J$ is an integer power of the cyclotomic character and $\sign_{H/K}$ is the Hilbert symbol $(-,\Delta_f)_{K}$, we have $\chi(a_0v_M(\alpha)^p) \approx (a_0v_M(\alpha)^p,\Delta_f)_{K}=(a_0v_M(\alpha),\Delta_f)_{K}$.

Let $\beta=-\Norm_{M/H}(\alpha -\sigma\alpha)$. We now claim that $v_H(\beta)=v_K(\Delta_f)$. Since $\mathcal{C}_f$ has potentially good reduction, $v_M(\sigma^i\alpha -\sigma^j\alpha)=v_M(\alpha-\sigma\alpha)$ whenever $\sigma^i \neq \sigma^j$ by \cite[Theorem 5.3]{HEC20}. Hence $v_M(\Delta_f)=p(p-1)v_M(\alpha-\sigma\alpha)=p(p-1)v_H(\Norm_{M/H}(\alpha -\sigma\alpha))$. The claim follows since $M/K$ is totally ramified of degree $p(p-1)$.

Observe that $\beta=\Norm_{N/H}(\sqrt{\alpha-\sigma\alpha})$ is a norm from $N$ and so by Proposition \ref{opeq} with the proof of \cite[Proposition 5.8]{Kob02}, $\chi(\beta)=\chi(\Frob^{v_H(\beta)})=(-G)^{v_K(\Delta_f)}.$
\end{proof}

Finally, we can amalgamate these results to prove Theorem \ref{localRN}.

\begin{proof}[Proof of Theorem \ref{localRN}]
By direct computation, $W(\sign_{H/K},\psi) \approx G$. Moreover, $W(\rho_J,\psi) \approx -\left(\frac{-2}{q_K}\right) (a_0v_M(\alpha),\Delta_f)_{K} (-G)^{1+v_K(\Delta_f)}$ by Lemmas \ref{ind} and \ref{chi}, and $v_M(\alpha)=(p-1)v_K(a_0)$. Since $v_K(\Delta_f)$ is odd and $G^2$ is real with sign $\left(\frac{-1}{q_K}\right)$ we get the result up to $\approx$ a priori; the equality follows since both sides are $\pm 1$.
\end{proof}

\textbf{Acknowledgements.} I wish to thank Tim Dokchitser for his suggestion when $\ell=2$ and Nirvana Coppola for answering questions on her paper.

\bibliographystyle{alpha}
\bibliography{coprime}

\newcommand{\etalchar}[1]{$^{#1}$}
\begin{thebibliography}{BBB{\etalchar{+}}20}

\bibitem[BBB{\etalchar{+}}20]{HEC20}
A.J. Best, L.A. Betts, M.~Bisatt, R.~van Bommel, V.~Dokchitser, O.~Faraggi,
  S.~Kunzweiler, C.~Maistret, A.~Morgan, S.~Muselli, and S.~Nowell.
\newblock {A user's guide to the local arithmetic of hyperelliptic curves}.
\newblock {\em arXiv:2007.01749}, 2020.

\bibitem[Bis19]{Bis19}
M.~Bisatt.
\newblock {Explicit root numbers of abelian varieties}.
\newblock {\em Trans. Amer. Math. Soc.}, 379:7889--7920, 2019.

\bibitem[Cop20]{Cop20}
N.~Coppola.
\newblock {Wild Galois representations: a family of hyperelliptic curves with
  large inertia image}.
\newblock {\em arXiv: 2001.08287}, 2020.

\bibitem[DDM20]{DDM20}
T.~Dokchitser, V.~Dokchitser, and A.~Morgan.
\newblock {Tate module and bad reduction}.
\newblock {\em Proc. AMS (to appear)}, 2020.

\bibitem[Dok18]{Dok18}
T.~Dokchitser.
\newblock {Models of curves over DVRs}.
\newblock {\em arXiv:1807.00025}, 2018.

\bibitem[Kob02]{Kob02}
S.~Kobayashi.
\newblock {The local root number of elliptic curves with wild ramification}.
\newblock {\em Math. Annalen}, 323(3):609--623, 2002.

\bibitem[Sab07]{Sab07}
M.~Sabitova.
\newblock {Root numbers of abelian varieties}.
\newblock {\em Trans. Amer. Math. Soc.}, 359:4259--4284, 2007.

\bibitem[Ser79]{Ser79}
J.-P. Serre.
\newblock {\em {Local fields}}.
\newblock Graduate Texts in Mathematics 67, Springer-Verlag, 1979.

\bibitem[Tat79]{Tat79}
J.~Tate.
\newblock {Number theoretic background}.
\newblock In {\em Automorphic Forms, Representations and L-Functions, Proc.
  Symp. Pure Math Vol 33 - Part 2}, pages 3--26. American Mathematical Society,
  1979.

\end{thebibliography}

\end{document}